\newcommand{\ignore}[1]{}
\newtheorem{theorem}{Theorem}
\newtheorem{lemma}[theorem]{Lemma}
\newtheorem{proposition}[theorem]
{Proposition}
\newtheorem*{theorem*}{Theorem}
\newtheorem*{corollary*}{Corollary}
\newcommand{\Proof}[1]
        {
        \noindent
        \emph{Proof #1.}~
        }
\newsavebox{\smallProofsym}                     
\newcommand{\smalleop}[1]
        {
        \mbox{} \hfill #1~~\usebox{\smallProofsym}\!\!\!\!\!\!\
        }
\def\P{\mathcal{P}}
\def\Q{\mathcal{Q}}
\begin{document}
\pagenumbering{arabic}

\title{
The bounds for the number of
linear extensions\\
via chain and antichain coverings}

\author{I. A. Bochkov\thanks{St. Petersburg State University, {\sl bycha@yandex.ru}}, 
F. V. Petrov\thanks{St. Petersburg State University, St. Petersburg Department of V. A. Steklov Mathematical
Institute RAS,
{\sl f.v.petrov@spbu.ru}. The work was supported by the Foundation for the
Advancement of Theoretical Physics and Mathematics ``BASIS''.}}

\maketitle

\begin{abstract}
Let $(\P,\leqslant)$ be a finite poset.
Define the numbers $a_1,a_2,\ldots$ 
(respectively, $c_1,c_2,\ldots$)
so that $a_1+\ldots+a_k$ 
(respectively, $c_1+\ldots+c_k$)
is the maximal number
of elements of $\P$
which may be covered by 
$k$ antichains (respectively,
 $k$ chains.)
 Then the number
$e(\P)$ of linear extensions of 
poset $\P$ is not less than
 $\prod a_i!$ and not more
 than $n!/\prod c_i!$.
 A corollary: if 
 $\P$ is partitioned onto
 disjoint antichains of sizes
 $b_1,b_2,
\ldots$, then $e(\P)\geqslant \prod b_i!$.
\end{abstract}

\section{Introduction}
Let $(\P,\leqslant)$ be a finite poset. 

In a recent paper \cite{MPP}
the following double inequality
for the number $e(\P)$  of linear extensions of
 $\P$ is applied. Partition
 $\P$ onto disjoint antichains
$\P=A_1\sqcup A_2\sqcup A_3\ldots,$ 
where $A_i$ is the antichain
of elements with rank $i$. 
Also partition $\P$ in arbitrary way
onto disjoint chains $\P=C_1\sqcup C_2\sqcup C_3\ldots$.
 Then
\begin{equation}\label{MPP_e}
   \frac{n!}{\prod |C_i|!}\geqslant e(\P)
\geqslant \prod |A_i|! 
\end{equation}
To quote \cite{MPP}:
``These bounds are probably folklore; for the lower bound see e.g
\cite{Br}.''

We prove that the right inequality
in \eqref{MPP_e} holds for
arbitrary antichain partition. 
We also improve both inequalities
in terms of Greene--Kleitman--Fomin parameters,
which we define now.

The  \emph{antichain Greene--Kleitman--Fomin parameters
 $a_1\geqslant a_2\geqslant \ldots$
 }
 of a finite poset $\P$
 are defined as follows:
$a_1+\ldots+a_k$ is the maximal number
of elements of $\P$
which may be covered by 
$k$ antichains ($k=1,2,\ldots$). 
The fact that the sequence
$(a_i)$ is weakly decreasing 
is a part of Greene--Kleitman--Fomin
theorem \cite{2,3}. Another claim of this
theorem is that for the partition 
$n=c_1+c_2+\ldots$ conjugate
to the partition $n=a_1+a_2+\ldots$
the sum $c_1+\ldots+c_k$ 
is the maximal number
of elements of  $\P$
which may be covered by
$k$ chains. The numbers 
$c_1,c_2,\ldots$ are called 
\emph{chain Greene--Kleitman--Fomin  parameters} of poset
$\P$.

The main result of this paper is

\begin{theorem}
\begin{equation}\label{aandc}
\frac{n!}{\prod c_i!}
\geqslant e(P)\geqslant \prod_i a_i!
\end{equation}
\end{theorem}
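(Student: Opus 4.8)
The two inequalities are handled separately, each using one kind of covering, and the corollary follows from the lower bound by a majorization argument. Throughout I use the Greene--Kleitman--Fomin theorem not only in the form stated above but in its stronger ``realizability'' form: the conjugate partitions $(c_i)$ and $(a_i)$ are the size‑multisets of honest chain and antichain partitions, and these partitions can be taken ``coherent'' (realizing the maxima simultaneously for all $k$). This is part of the Greene--Kleitman circle of results; the lower‑bound version is the one that will need care.

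\textbf{Upper bound.} First I would fix a partition of $\P$ into chains $C_1,C_2,\dots$ whose sizes, sorted, are exactly $c_1\geq c_2\geq\cdots$. Then I would define a map $\Phi$ from (linear extensions of $\P$) $\times\prod_i\mathrm{Sym}(C_i)$ into the set of all $n!$ total orders on $\P$: given a linear extension $L$ and permutations $\pi_i\in\mathrm{Sym}(C_i)$, the order $\Phi(L,(\pi_i))$ uses, for each $i$, the same set of positions for the elements of $C_i$ as $L$ does, but places those elements inside those positions according to $\pi_i$ instead of in their forced increasing order. This $\Phi$ is injective: from $\Phi(L,(\pi_i))$ one reads off the position set of each $C_i$ (the blocks $C_i$ are fixed), hence recovers $L$ by re‑sorting each block into increasing order, and then recovers every $\pi_i$. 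Therefore $e(\P)\cdot\prod_i c_i!\leq n!$.

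\textbf{Lower bound.} Dually, the plan is to produce a \emph{layered} antichain partition realizing $(a_i)$: a strictly order‑preserving map $\rho\colon\P\to\{1,\dots,h\}$ (so $x<y$ in $\P$ forces $\rho(x)<\rho(y)$, and in particular each level $\rho^{-1}(j)$ is an antichain) whose level sizes are exactly the multiset $\{a_1,a_2,\dots\}$. Granting this, let $\P^\star$ be the poset on the same ground set with $x<_{\P^\star}y$ iff $\rho(x)<\rho(y)$. Since $\rho$ is strictly order‑preserving, $\P^\star$ contains every relation of $\P$, so $e(\P)\geq e(\P^\star)$. But $\P^\star$ is a stack of antichains, so a linear extension of it is simply an independent choice of a linear order inside each level; hence $e(\P^\star)=\prod_j|\rho^{-1}(j)|!=\prod_i a_i!$, giving $e(\P)\geq\prod_i a_i!$.

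\textbf{Corollary and main obstacle.} If $\P=B_1\sqcup\cdots\sqcup B_m$ with each $B_i$ an antichain, then for every $k$ the $k$ largest blocks form $k$ antichains, so their total size is at most $a_1+\cdots+a_k$; thus the sorted block sizes are majorized by $(a_i)$. As $t\mapsto\log(t!)$ is convex, $(x_i)\mapsto\sum_i\log(x_i!)$ is Schur‑convex, whence $\prod_i|B_i|!\leq\prod_i a_i!\leq e(\P)$. The re‑sorting injection and the refinement step are routine; the real content is in the two structural inputs, and especially in the lower‑bound one. The naive rank function does \emph{not} in general have level sizes $(a_i)$ — which is exactly why this beats the folklore bound — so one must assign the ``flexible'' elements to levels so that the level sizes hit $(a_i)$ on the nose. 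I expect this to be the crux, to be resolved either by feeding in the nested maximum antichain‑families of Greene--Kleitman--Fomin or by a dedicated exchange/flow argument.
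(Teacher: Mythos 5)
Both halves of your argument rest on a ``realizability'' assumption that is not part of the Greene--Kleitman--Fomin theorem, and for the upper bound it is simply false. Take $\P=\{1,\dots,6\}$ with relations generated by $1<2<3$, $4<5<6$ and $2<5$. The longest chain is $1<2<5<6$, so $c_1=4$; the two chains $\{1,2,3\}$ and $\{4,5,6\}$ cover everything, so $c_1+c_2=6$ and $c_2=2$. But every chain through $3$ lies in $\{1,2,3\}$ and every chain through $4$ lies in $\{4,5,6\}$, so the only $4$-element chain is $\{1,2,5,6\}$, whose complement $\{3,4\}$ is an antichain: no chain partition has sorted sizes $(4,2)$. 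Your injection $\Phi$ is fine, but applied to an actual chain partition $C_1,C_2,\dots$ it only yields the folklore bound $e(\P)\leqslant n!/\prod_i |C_i|!$, and since $(c_i)$ majorizes the sorted sizes of \emph{every} chain partition, Karamata gives $n!/\prod_i |C_i|!\geqslant n!/\prod_i c_i!$ --- your bound is always at least as weak as the claimed one, and in the example it gives $20$ versus the theorem's $15$. So this route cannot prove the upper bound in \eqref{aandc}.

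The lower bound has the same defect, and you half-acknowledge it: everything hinges on producing a strictly order-preserving $\rho:\P\to\{1,\dots,h\}$ with level sizes exactly $(a_i)$, which you defer as ``the crux \dots\ to be resolved.'' That is not a routine gap: an antichain partition whose sorted part sizes equal $(a_i)$ is exactly a completely saturated partition, and the Greene--Kleitman circle of results only guarantees simultaneous $k$- and $(k+1)$-saturation, not saturation for all $k$ at once; demanding in addition that the partition be layered is stronger still. In other words, the step you postpone is the entire content of the theorem. The paper avoids realizability altogether: it proves $e(\P)\geqslant\sum_{x\in A}e(\P-x)$ for an antichain $A$ (and uses the equality $e(\P)=\sum_i e(\P-x_i)$ over the maximal elements for the upper bound), shows via Proposition~\ref{predl_maj} that deleting one suitable element changes the Greene--Kleitman--Fomin parameters, up to majorization, by replacing some $a_m$ with $a_m-1$ (resp.\ some $c_j$ with $c_j-1$), and closes the induction with Karamata's inequality for $f(x)=x!$. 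Your Schur-convexity derivation of the corollary from the lower bound is correct and matches the paper's, but it inherits the gap. To salvage your approach you would have to replace ``realize $(a_i)$ and $(c_i)$ by actual partitions'' with an inductive, one-element-at-a-time argument of this kind.
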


\section{Majorization lemmata}

Let $X$ be a finite multiset
consisting of non-negative numbers.
For non-negative integer $k$ define
$s_k(X)$ as the sum of $\min(k,|X|)$
maximal elements of 
multiset  $X$; also
denote $s(X)=s_{|X|}(X)$
the sum of all elements of $X$. 
We say that multiset $X$ \emph{majorizes}
another multiset $Y$ of non-negative numbers and write
$X\succ Y$ if $s(X)=s(Y)$ and $s_k(X)\geqslant s_k(Y)$
for all $k$.

We need the following version of Karamata's majorization inequality.

\begin{theorem*}[Karamata inequality]
Let $f(x):\{0,1,\ldots\}\to (0,+\infty)$
be a log-convex function,
i.e.,
$f(0)=1$ and
$f(x+1)/f(x)$ is an increasing function
of $x$. Next, let $X,Y\subset \{0,1,\ldots\}$ be two finite multisets.
Then $X$ majorizes $Y$
if and only if
 $\prod_X f\geqslant \prod_Y f$ 
 for any log-convex function $f$.
\end{theorem*}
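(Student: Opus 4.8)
The plan is to pass from log-convex functions to convex sequences, parametrize those by ``ramp'' functions, and reduce the statement to a self-contained combinatorial lemma about majorization. Set $\phi=\log f$; the hypotheses on $f$ say exactly that $\phi\colon\{0,1,2,\dots\}\to\RR$ is a convex sequence with $\phi(0)=0$, i.e.\ the differences $d_j=\phi(j+1)-\phi(j)$ satisfy $d_0\leqslant d_1\leqslant\cdots$. Since $X,Y$ are bounded, only the restriction of $\phi$ to $\{0,\dots,N\}$, $N=\max(\max X,\max Y)$, matters, and there a one-line telescoping gives
\[
\phi(x)=b\,x+\sum_{m\geqslant 1}c_m\,\rho_m(x),\qquad \rho_m(x):=\max(0,x-m),
\]
with $b=\phi(1)\in\RR$ arbitrary and $c_m=d_m-d_{m-1}\geqslant 0$. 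Hence, writing $g_m(Z):=\sum_{z\in Z}\rho_m(z)$,
\[
\log\prod_X f-\log\prod_Y f=b\bigl(s(X)-s(Y)\bigr)+\sum_{m\geqslant 1}c_m\bigl(g_m(X)-g_m(Y)\bigr).
\]
Because $b$ runs over $\RR$ and the $c_m$ over $[0,\infty)$ independently, ``$\prod_X f\geqslant\prod_Y f$ for every log-convex $f$'' is equivalent to the conjunction of $s(X)=s(Y)$ and $g_m(X)\geqslant g_m(Y)$ for all integers $m\geqslant 0$. (If one insists that $f(x+1)/f(x)$ be strictly increasing, perturb $\phi$ by $\varepsilon(\rho_1+\cdots+\rho_{N-1})$ and let $\varepsilon\to 0^+$; this changes nothing, and I will not belabour it.)

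It therefore remains to prove: for finite multisets $X,Y\subset\{0,1,\dots\}$ one has $X\succ Y$ if and only if $s(X)=s(Y)$ and $g_m(X)\geqslant g_m(Y)$ for all $m\geqslant 0$. The tool is the elementary duality between partial sums and ramp sums. Listing $Z$ in non-increasing order $z_1\geqslant z_2\geqslant\cdots$ and using $(z_i-m)_+=z_i-m$ precisely when $z_i>m$, one checks directly that
\[
g_m(Z)=\max_{0\leqslant k\leqslant|Z|}\bigl(s_k(Z)-km\bigr),\qquad s_k(Z)=g_{z_k}(Z)+k\,z_k,\qquad g_m(Z)+km\geqslant s_k(Z)\ \ \forall m .
\]

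Now the ``$\Rightarrow$'' direction: $X\succ Y$ gives $s(X)=s(Y)$ by definition and $s_k(X)\geqslant s_k(Y)$ for all $k$ (after zero-padding the shorter multiset, which affects neither the $g_m$, as $\rho_m(0)=0$, nor the majorization relation); plugging this into $g_m=\max_k(s_k-km)$ yields $g_m(X)\geqslant g_m(Y)$. For ``$\Leftarrow$'', fix $k$ (with both multisets of size $n$, $1\leqslant k\leqslant n$), set $m:=x_k$, and chain
\[
s_k(X)=g_m(X)+km\ \geqslant\ g_m(Y)+km\ \geqslant\ s_k(Y),
\]
the equality being the second displayed identity applied to $X$, the first inequality the hypothesis, and the last the third displayed identity applied to $Y$; together with $s(X)=s(Y)$ this is $X\succ Y$.

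The step I expect to be the crux is exactly this last chain: one must invoke the \emph{exact} evaluation $s_k(X)=g_{x_k}(X)+k x_k$ at the threshold $m=x_k$ that is optimal \emph{for $X$}, whereas for $Y$ only the one-sided bound $g_m(Y)+km\geqslant s_k(Y)$ is available, since the two multisets need not share an optimal threshold; so the order of the three relations is forced, and any attempt to symmetrize the argument collapses. The remaining points---bookkeeping when $|X|\neq|Y|$, and strict versus weak log-convexity---are routine and handled as indicated above.
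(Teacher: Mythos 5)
Your proof is correct. Note that the paper itself offers no proof of this statement: it is quoted as a known form of Karamata's (Hardy--Littlewood--P\'olya) majorization inequality and used as a black box, so there is nothing in the text to compare your argument against. What you give is essentially the standard proof of the discrete majorization theorem: writing $\phi=\log f$ as a nonnegative combination of the ramp functions $\rho_m(x)=\max(0,x-m)$ plus a linear term reduces the family of all log-convex test functions to the extremal ones, and the remaining equivalence $X\succ Y\iff\bigl(s(X)=s(Y)$ and $g_m(X)\geqslant g_m(Y)$ for all $m\bigr)$ is exactly the duality between the partial-sum and ramp-sum descriptions of majorization. I checked the three displayed identities (including the case of ties, where $s_k(Z)=g_{z_k}(Z)+kz_k$ still holds because the tied entries contribute $z_k$ each), the telescoping that gives $c_m=d_m-d_{m-1}\geqslant 0$, and the asymmetric chain $s_k(X)=g_{x_k}(X)+kx_k\geqslant g_{x_k}(Y)+kx_k\geqslant s_k(Y)$; all are sound, and the zero-padding and strict-versus-weak monotonicity caveats are handled correctly. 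The only stylistic quibble is the phrase ``$b=\phi(1)\in\RR$ arbitrary'': $b$ is determined by $f$, and what you mean (correctly) is that every choice of $b\in\RR$ and $c_m\geqslant 0$ is realized by some admissible $f$, which is what makes the two conditions extractable independently.
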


This theorem immediately yields
\begin{proposition}\label{opr}
For multisets $X,Y,Z\subset \{0,1,\ldots\}$
the conditions
$X\succ Y$ and
$X\cup Z\succ Y\cup Z$ 
are equivalent.
\end{proposition}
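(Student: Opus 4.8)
The plan is to read Proposition~\ref{opr} straight off the Karamata inequality quoted above, with no combinatorics on partial sums. The mechanism is that Karamata translates the relation $\succ$ into a statement quantified over log-convex weight functions, and on that side the multiset $Z$ contributes only a common positive multiplicative factor, which simply cancels.

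Concretely, I would proceed as follows. By the Karamata inequality, $X\succ Y$ is equivalent to the assertion that $\prod_X f\geqslant \prod_Y f$ for every log-convex $f\colon\{0,1,\ldots\}\to(0,+\infty)$. Likewise, $X\cup Z\succ Y\cup Z$ is equivalent to $\prod_{X\cup Z} f\geqslant \prod_{Y\cup Z} f$ for every such $f$. Now fix one log-convex $f$. Then $\prod_{X\cup Z} f=(\prod_X f)(\prod_Z f)$ and $\prod_{Y\cup Z} f=(\prod_Y f)(\prod_Z f)$, and since $Z$ is a finite multiset and $f$ is strictly positive, the quantity $\lambda_f:=\prod_Z f$ is a well-defined positive real. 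Dividing both sides by $\lambda_f$ shows that, for this particular $f$, the inequality $\prod_{X\cup Z} f\geqslant \prod_{Y\cup Z} f$ holds if and only if $\prod_X f\geqslant \prod_Y f$ holds. Since this equivalence holds for each $f$ separately, the two universally quantified conditions coincide, and hence $X\succ Y$ is equivalent to $X\cup Z\succ Y\cup Z$.

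The only real point to watch will be the legitimacy of that cancellation, which is exactly why one wants $f>0$ and $Z$ finite in the hypotheses; there is no genuine obstacle. (For completeness: one could instead check the definition of $\succ$ directly — the total-sum equality $s(X)=s(Y)$ transfers immediately to $s(X\cup Z)=s(Y\cup Z)$, but comparing the truncated sums $s_k(X\cup Z)$ with $s_k(Y\cup Z)$ requires a short interleaving argument, so routing through Karamata is both shorter and automatically symmetric in the two implications.)
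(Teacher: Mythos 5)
Your proof is correct and is exactly the route the paper takes: the paper offers no separate argument, merely noting that the Karamata characterization ``immediately yields'' the proposition, which is precisely your cancellation of the common positive factor $\prod_Z f$ on the weight-function side of the equivalence.
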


Further we use Karamata inequality
for the log-convex function $f(x)=x!$

We also use the following simple fact

\begin{proposition}\label{predl_maj}
Let $X,Y$ be two
finite multisets consisting of
non-negative integers,
and the sum of elements in $Y$
is by 1 less than the sum of elements
in $X$:
$s(Y)=s(X)-1$. Denote by
$x_i$ and $y_i$ there $i$-th
largest elements, respectively.
Also denote
$x_i=0$ for $i>|X|$, 
$y_i=0$ for $i>|Y|$. (In these
notations we have
 $s_k(X)=\sum_{i\leqslant k} x_i$,
$s_k(Y)=\sum_{i\leqslant k} y_i$.)

Assume that $s_k(X)\geqslant s_k(Y)\geqslant s_k(X)-1$
for all $k$, and there
exists 
$m$ such that $s_k(X)=s_k(Y)$
for all $k=1,2,\ldots,m-1$. 
Then $x_m>0$ and
$Y\succ (X\cup \{x_m-1\})\setminus x_m$.
\end{proposition}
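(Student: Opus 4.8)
The plan is to rephrase everything in terms of the partial-sum sequences. Write $X':=(X\cup\{x_m-1\})\setminus\{x_m\}$, the multiset obtained from $X$ by lowering one copy of the value $x_m$ by one; note $s(X')=s(X)-1=s(Y)$ provided $x_m\geqslant1$. First I would fix notation. Since $s(X)\neq s(Y)$, the sequences $k\mapsto s_k(X)$ and $k\mapsto s_k(Y)$ agree at $k=0$ but disagree for all large $k$, so there is a least index $m^*$ with $s_{m^*}(X)\neq s_{m^*}(Y)$; the hypothesis $s_k(X)=s_k(Y)$ for $k=1,\dots,m-1$ forces $m\leqslant m^*$, and together with $s_{m^*}(X)\geqslant s_{m^*}(Y)\geqslant s_{m^*}(X)-1$ it gives $s_{m^*}(X)=s_{m^*}(Y)+1$, equivalently $x_{m^*}=y_{m^*}+1\geqslant1$. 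The claim $x_m>0$ is then easy: if $x_m=0$ then $x_j=0$ for all $j\geqslant m$ (the $x_i$ are non-negative and weakly decreasing), so $s(X)=s_{m-1}(X)=s_{m-1}(Y)\leqslant s(Y)$, contradicting $s(X)=s(Y)+1$.

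The crucial step is to show that the value $v:=x_{m^*}$ is not repeated in $X$, i.e. $x_{m^*}>x_{m^*+1}$. Suppose instead $x_{m^*}=x_{m^*+1}=v$. Using $s_{m^*-1}(X)=s_{m^*-1}(Y)$, the inequality $s_{m^*}(Y)\leqslant s_{m^*}(X)$ gives $y_{m^*}\leqslant v$, while $s_{m^*+1}(Y)\geqslant s_{m^*+1}(X)-1$ gives $y_{m^*}+y_{m^*+1}\geqslant 2v-1$; since $y_{m^*+1}\leqslant y_{m^*}$, this yields $2v-1\leqslant 2y_{m^*}\leqslant 2v$, hence $y_{m^*}=v$ and so $s_{m^*}(Y)=s_{m^*-1}(Y)+v=s_{m^*}(X)$, contradicting the choice of $m^*$. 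Now let $q$ be the largest index with $x_q=x_m$, so that $x_m=\dots=x_q>x_{q+1}$ and $m\leqslant q$. Because $m\leqslant m^*$ we have $x_m\geqslant x_{m^*}=v$: if $x_m>v$ the value $x_m$ cannot occur at position $m^*$ or later, so $q<m^*$; if $x_m=v$ the claim just proved gives $q=m^*$. In either case $q\leqslant m^*$.

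To finish, I would pin down $X'$ exactly. Lowering one copy of $v=x_m$ to $v-1$ and re-sorting: since $x_{q+1}\leqslant v-1$, the altered entry lands at position $q$, so $x'_i=x_i$ for $i\neq q$ and $x'_q=x_q-1$. Consequently $s_k(X')=s_k(X)$ for $k<q$ and $s_k(X')=s_k(X)-1$ for $k\geqslant q$. It remains to verify $Y\succ X'$. The total sums agree: $s(Y)=s(X)-1=s(X')$. For $k\geqslant q$ the hypothesis gives $s_k(Y)\geqslant s_k(X)-1=s_k(X')$. For $k<q$ we have $k<q\leqslant m^*$, hence $s_k(Y)=s_k(X)=s_k(X')$. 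Thus $s_k(Y)\geqslant s_k(X')$ for every $k$ with equality of totals, i.e. $Y\succ X'$, as required.

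The step I expect to be the real obstacle is the non-repetition of the critical value $x_{m^*}$ in the middle paragraph. Once that is in hand one gets $q\leqslant m^*$, so the single unit by which $X'$ dips below $X$ is withdrawn only from indices $k\geqslant m^*$ — exactly the range in which $Y$ is already permitted to trail $X$ — and the majorization falls out. Everything else is routine manipulation of the partial sums $s_k$.
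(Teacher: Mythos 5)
Your argument is correct, and its core coincides with the paper's: both proofs locate the first index (your $m^*$, the paper's $j$) at which the partial sums of $X$ and $Y$ diverge, deduce $x_{m^*}=y_{m^*}+1>0$, and then establish the crucial non-repetition $x_{m^*}>x_{m^*+1}$ by closely parallel contradiction arguments. Where you genuinely diverge is the finish: the paper first proves $Y\succ X_j:=(X\cup\{x_j-1\})\setminus x_j$ and then transfers this to the target multiset via the exchange $\{x_j-1,x_m\}\succ \{x_m-1,x_j\}$ together with Proposition~\ref{opr} and transitivity, whereas you bypass the intermediate multiset entirely by sorting $(X\cup\{x_m-1\})\setminus x_m$ explicitly --- the decremented entry lands at the last position $q$ carrying the value $x_m$, and $q\leqslant m^*$ guarantees that the one-unit deficit occurs only at indices where $Y$ is already permitted to trail $X$. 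This is a legitimate simplification: it trades the appeal to Proposition~\ref{opr} for a short case analysis on whether $x_m=x_{m^*}$ or $x_m>x_{m^*}$. One slip to repair: in the final paragraph you write ``since $x_{q+1}\leqslant v-1$'' with $v=x_{m^*}$; when $x_m>x_{m^*}$ this inequality can fail (e.g.\ $x_{q+1}$ may equal $v$), but the inequality you actually need, $x_{q+1}\leqslant x_q-1=x_m-1$, is immediate from your definition of $q$ as the largest index with $x_q=x_m$, so the argument survives intact.
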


\begin{proof}
We get $y_i=x_i$ for $i=1,2,\ldots,m-1$.
Consider the
minimal $j$ such that
$y_j\ne x_j$. Then $j\geqslant m$
and denoting $\varepsilon=s_j(X)-s_j(Y)\in \{0,1\}$ we have
$$
y_j=s_j(Y)-s_{j-1}(Y)=
s_j(Y)-s_{j-1}(X)=S_j(X)-\varepsilon-s_{j-1}(X)
=x_j-\varepsilon,
$$
therefore $\varepsilon=1$ and $y_j=x_j-1$.
So we get $x_m\geqslant x_j=y_j+1>0$.

Assume that $x_{j+1}=x_j$.
Then $y_{j+1}\leqslant y_j=x_j-1=x_{j+1}-1$
and $s_{j+1}(Y)\leqslant s_{j+1}(X)-2$,
a contradiction. Therefore
$x_{j+1}\leqslant x_j-1$,
and $Y$ majorizes multiset
$X_j:=(X\cup \{x_j-1\})\setminus x_j$:
indeed, $s_k(Y)=s_k(X_j)=s_k(X)$
for $k\leqslant j-1$, and
$s_k(Y)\geqslant s_k(X)-1=s_k(X_j)$
for $k\geqslant j$ .

Since $j\geqslant m$, we have
$\{x_j-1,x_m\}\succ \{x_m-1,x_{j}\}$,
and by Proposition
\ref{opr} we get
$$Y\succ X_j=(X\setminus \{x_m,x_j\})\cup 
\{x_j-1,x_{m}\}\succ 
(X\setminus \{x_m,x_j\})\cup 
\{x_m-1,x_{j}\}=X_m,$$
as needed.
\end{proof}

\section{Lower bound}

Let $(\P,\leqslant)$ be a finite poset, $A\subset \P$ be an antichain.
The proof of the main
result of
\cite{1} implies the inequality
\begin{equation}\label{step}
e(\P)\geqslant \sum_{x\in A} e(\P-x)
\end{equation}
for the number of
linear extensions $e(\cdot)$.
Inequality \eqref{step}
is also proved differently in \cite{S}.
In \cite{1} it was proved that  \eqref{step} turns into equality if 
antichain $A$
have non-empty intersection with
any maximal chain.

For sake of completeness we prove \eqref{step}.
Define an injection from linear extensions
of posets
$\P-x$, where $x\in A$, 
to linear extensions
of poset $\P$.
A linear extension of
$\P-x$, $x\in \P$,
is understood as
an order-preserving bijection
$f:\P-x\mapsto \{2,3,\ldots,n\}$,
where $n=|\P|$. For such
$f$ we construct an order-preserving bijection
$\phi f:\P\mapsto \{1,2,\ldots,n\}$
as follows. Consider the
\emph{greedy falling chain from $x$}:
put $x_0=x$, define $x_{i+1}$ 
as an element of $\P$ which
is less than
 $x_i$ for which $f(x_i)$
 is maximal possible. We get a chain $x_0>x_1>\ldots>x_t$.
 
Let's shift the values along this chain. Namely, put
$\phi f (x_i)=f(x_{i+1})$ for $i=0,1,\ldots,t-1$,
$\phi f(x_t)=1$. For $y\notin \{x_0,\ldots,x_t\}$
put $\phi f (y)=f(y)$. 
Note that $\phi f$
is order-preserving 
due to the greedy property.
Indeed, if $y\{x_0,\ldots,x_t\}$ and $x_i>y$, then by greediness we get
 $f(y)\leqslant f(x_{i+1})=\phi f(y)$,
 other inequalities are obvious.

It is straightforward that
for the map $\phi f$ the chain
$x_t<x_{t-1}<\ldots<x_0$ is the 
\emph{greedy increasing chain}:
$x_t=(\phi f)^{-1}(1)$, 
and each next element realizes the
minimal possible value of $\phi f$.

Therefore, if the maps
$f:\P-x\mapsto \{2,3,\ldots,n\}$
and $g:\P-y\mapsto \{2,3,\ldots,n\}$ 
satisfy
$\phi f=\phi g$, then $x,y$ 
belong to the greedy increasing
chain of $\phi f$. If 
$x,y$ also belong to the antichain
 $A$, then we get $x=y$ and $f=g$. 

Injectivity of
$\varphi$ is proved, it implies
inequality \eqref{step}.

\begin{proof}[Proof
of lower bound in \eqref{aandc}]
Induction on $n=|\P|$. Base
 $n=1$ is obvious.
 
 Step from $n-1$ to $n$. 

Let $A$ be a maximal
antichain in $\P$, then
$|A|=a_1$. Fix $x\in A$.
Let $r_1\geqslant r_2\ldots$ 
denote antichain Greene--Kleitman--Fomin
parameters for
$\P-x$. Due to Proposition \ref{predl_maj} the multiset
$\{r_1,r_2,\ldots\}$ majorizes
the multiset 
$\{a_1-1,a_2,a_3,\ldots\}$.
By Karamata inequality for log-convex function
$f(x)=x!$ we get
$$\prod r_i!\geqslant (a_1-1)!\prod_{i>1} a_i!=
\frac1{a_1}\prod_i a_i!$$
Therefore by induction
proposition we have
 $e(\P-x)\geqslant \frac1{a_1}\prod_i a_i!$
For all $x\in A$. 
Summing up over all
  $x\in A$ and using
 \eqref{step} we see
 that indeed 
$e(\P)\geqslant \prod_i a_i!$, 
as needed.
\end{proof}

\begin{corollary*}
Let $\P$ be partitioned onto
antichains of sizes
$c_1,c_2,\ldots$. Then
$e(\P)\geqslant \prod_i c_i!$.
\end{corollary*}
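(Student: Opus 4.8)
The plan is to derive the corollary directly from the already-established lower bound in \eqref{aandc} together with the Karamata inequality. Write $A_1,A_2,\ldots$ for the antichains of the given partition, labelled so that $c_1=|A_1|\geqslant c_2=|A_2|\geqslant\cdots$, and let $a_1\geqslant a_2\geqslant\cdots$ denote the antichain Greene--Kleitman--Fomin parameters of $\P$. Extend both sequences by zeros, so that we may regard $\{a_1,a_2,\ldots\}$ and $\{c_1,c_2,\ldots\}$ as finite multisets of non-negative integers in the sense of Section 2; this padding is harmless since $s_k$ does not change when zeros are added.

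First I would check that $\{a_1,a_2,\ldots\}$ majorizes $\{c_1,c_2,\ldots\}$. Both multisets sum to $n=|\P|$: the $c_i$ because $A_1\sqcup A_2\sqcup\cdots$ is a partition of $\P$, and the $a_i$ because $\P$ is covered by finitely many antichains (its height, by Mirsky's theorem), beyond which the $a_i$ vanish. For the partial sums, note that $A_1\cup\cdots\cup A_k$ is a subset of $\P$ of size $c_1+\cdots+c_k$ that is covered by $k$ antichains; hence, by the defining maximality property of the parameters $a_i$, we have $a_1+\cdots+a_k\geqslant c_1+\cdots+c_k$. Thus $s_k(\{a_i\})\geqslant s_k(\{c_i\})$ for every $k$, and together with equality of the total sums this gives $\{a_i\}\succ\{c_i\}$.

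Applying the Karamata inequality to the log-convex function $f(x)=x!$ then yields $\prod_i a_i!\geqslant\prod_i c_i!$. Combining this with the lower bound $e(\P)\geqslant\prod_i a_i!$ of \eqref{aandc} gives $e(\P)\geqslant\prod_i c_i!$, as claimed. There is no real obstacle in this argument; the only point requiring a little care is the bookkeeping with the zero-padding and the observation that the number of nonzero $a_i$ is exactly the least number of antichains needed to cover $\P$, so that comparing the two (padded) multisets is legitimate.
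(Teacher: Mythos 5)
Your proposal is correct and follows exactly the route the paper takes: observe that the multiset $\{a_1,a_2,\ldots\}$ majorizes $\{c_1,c_2,\ldots\}$ (since $k$ antichains of the partition cover $c_1+\cdots+c_k$ elements, forcing $a_1+\cdots+a_k\geqslant c_1+\cdots+c_k$, with equal total sums $n$), then apply the Karamata inequality for $f(x)=x!$ together with the lower bound in \eqref{aandc}. You have merely spelled out the details that the paper leaves implicit, so there is nothing to add.
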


\begin{proof}
It suffices to note that multiset $\{c_1,c_2,\ldots\}$ is
majorized by the multiset $\{a_1,a_2,\ldots\}$
and apply Karamata inequality for factorial. Alternatively,
we may prove this claim directly
by induction using \eqref{step}.
\end{proof}

\section{Upper bound}

For bounding the number
of linear extensions from above we need

\begin{lemma}\label{st}
Let $\P$ be a finite poset, 
$A\subset \P$ be the antichain of maximal
elements in $\P$,
$c_1\geqslant c_2\geqslant\ldots $ 
be the chain Greene--Kleitman--Fomin
parameters of $\P$. Then the elements of
$A$ may be enumerated as
 $x_1,x_2,\ldots,x_{|A|}$ so that
 for all $i=1,2,\ldots,|A|$ 
 there exist $i$ chains whose
 maximal elements belong to
 the set
$\{x_1,\ldots,x_i\}\cup (\P\setminus A)$ with total size
$c_1+\ldots+c_i$.
\end{lemma}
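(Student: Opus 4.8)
The plan is to reformulate the statement as a greedy construction of the enumeration and to supply the single step of the greedy by an exchange argument from network flows.

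\emph{Reformulation and greedy set‑up.} For a family $\mathcal F$ of pairwise disjoint chains of $\P$, the maximal element of a chain of $\mathcal F$ lies in $A$ exactly when that chain meets $A$; so ``every chain of $\mathcal F$ has its maximal element in $\{x_1,\dots,x_i\}\cup(\P\setminus A)$'' is the same as $(\bigcup\mathcal F)\cap A\subseteq\{x_1,\dots,x_i\}$, where $\bigcup\mathcal F$ denotes the set of elements covered by $\mathcal F$. Call a family of at most $k$ pairwise disjoint chains covering $c_1+\dots+c_k$ elements an \emph{optimal $k$-cover}; one always exists, and the chains may be taken disjoint, since by Dilworth a set covered by $k$ chains can be partitioned into $k$ chains. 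It suffices to enumerate $A=\{x_1,\dots,x_m\}$ ($m=|A|$) so that for every $i$ some optimal $i$-cover $\mathcal F_i$ satisfies $(\bigcup\mathcal F_i)\cap A\subseteq\{x_1,\dots,x_i\}$. I build $x_1,\dots,x_m$ one at a time, keeping the invariant that such an $\mathcal F_i$ exists for the already‑built prefix (the empty prefix works, and $i=m$ is automatic); since the requirement for index $i$ constrains only $x_1,\dots,x_i$, later choices never destroy earlier indices. So it remains to pass from $i$ to $i+1$ for $0\le i\le m-2$.

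\emph{The exchange step.} Thus I must show: if $S=\{x_1,\dots,x_i\}$ with $i\le m-2$ admits an optimal $i$-cover $\mathcal F$ with $(\bigcup\mathcal F)\cap A\subseteq S$, then some $x\in A\setminus S$ admits an optimal $(i+1)$-cover $\mathcal F'$ with $(\bigcup\mathcal F')\cap A\subseteq S\cup\{x\}$. Model this by the standard flow network: split each $v\in\P$ into $v^-,v^+$ joined by an arc $e_v\colon v^-\to v^+$ of capacity $1$ and cost $-1$; add arcs $v^+\to w^-$ of cost $0$ whenever $v<w$ in $\P$; add a source $s$ with arcs $s\to v^-$, a sink $t$ with arcs $v^+\to t$, and a cost‑$0$ arc $s\to t$ of large capacity. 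Integral flows of value $k$ correspond to families of at most $k$ pairwise disjoint chains, the number of arcs $e_v$ carrying flow being the number of covered elements, so optimal $k$-covers are exactly the minimum‑cost flows of value $k$. The network is acyclic, so by the successive‑shortest‑path property of minimum‑cost flows the flow $\phi$ representing $\mathcal F$ becomes a minimum‑cost flow $\phi'$ of value $i+1$ — i.e.\ an optimal $(i+1)$-cover $\mathcal F'$ — after pushing one unit along a single shortest $s$--$t$ path $P$ in the residual network of $\phi$.

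\emph{Key observation.} Such an augmenting path $P$ can absorb at most one maximal element: if $v\in A$ then $v^+$ is incident only to $e_v$ and to $v^+\to t$, and inspecting the residual of $\phi$ shows $P$ may visit $v^+$ only as the vertex immediately before $t$, entering through $e_v$ — any other route would make $P$ leave the sink. Hence $P$ meets $v^+$ for at most one maximal $v$, that $v$ is the only maximal element $\phi'$ adds over $\phi$, and none is removed. Therefore $(\bigcup\mathcal F')\cap A\subseteq\big((\bigcup\mathcal F)\cap A\big)\cup\{v\}\subseteq S\cup\{v\}$, and we take $x=v$ if $v\notin S$ and otherwise $x$ an arbitrary element of $A\setminus S$ (nonempty since $|S|=i\le m-2$). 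This closes the induction, hence proves the lemma.

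I expect this exchange step to be the main obstacle. One must be careful with the flow correspondence — disjointness of the chains in a cover, the role of the wasted $s\to t$ flow, and the identification of minimum‑cost flows of value $k$ with optimal $k$-covers — and then combine the standard single‑augmenting‑path fact with the small but decisive remark that an augmenting path can swallow only one maximal vertex, immediately before reaching the sink. The same exchange can be carried out without flow language, by superimposing $\mathcal F$ with an arbitrary optimal $(i+1)$-cover and correcting $\mathcal F$ along one suitable alternating path, but the bookkeeping is essentially that of the residual network.
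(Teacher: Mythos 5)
Your proof is correct, but it takes a genuinely different route from the paper's. Both arguments are greedy, choosing $x_{i+1}$ so that some optimal family of $i+1$ disjoint chains covers no element of $A$ outside $\{x_1,\dots,x_{i+1}\}$, but the exchange steps differ. The paper contracts $A\setminus\{x_1,\dots,x_i\}$ to a single new element $t$, notes that the first $i$ chain parameters survive the contraction, and shows that the $(i+1)$-st parameter of the contracted poset is still $c_{i+1}$ by invoking Greene--Kleitman--Fomin duality (a family of $i$ chains and $\alpha$ antichains covering the poset with every chain meeting every antichain) to get a counting contradiction from $\alpha<c_{i+1}$; the chain through $t$ in the resulting optimal cover then designates $x_{i+1}$. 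You instead encode optimal $k$-covers as minimum-cost flows of value $k$ and combine the successive-shortest-path theorem with the observation that, for maximal $v$, the vertex $v^+$ can only occur as the penultimate vertex of a simple augmenting path, so one augmentation covers at most one new element of $A$. What your route buys is independence from the GKF structure theorem --- you use the $c_i$ only as coverage maxima --- at the price of importing standard min-cost-flow machinery; the paper stays inside poset combinatorics but leans on the full duality statement. (The source file of the paper in fact contains, in a discarded fragment after the final line of the document, a sketch of essentially your flow argument.) Two details to polish: the successive-shortest-path step is licensed by $\phi$ being a minimum-cost flow of its value (equivalently, no negative cycle in its residual network), not by acyclicity of the underlying network; and you should say explicitly that the augmenting path is taken simple, which is what rules out its entering $v^+$ via the residual arc $t\to v^+$.
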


\begin{proof}
Assume that the elements $x_1,\ldots,x_{i}$
are already chosen and satisfy the conditions
of Lemma. Contract the set 
$A\setminus \{x_1,\ldots,x_i\}$ to a new one element
 $t$, denote the new poset $\Q$. 
 The first $i$ chain Greene--Kleitman--Fomin
parameters of $\Q$ are the same as for $\P$.
Denote the
$(i+1)$-th parameter by $\alpha$. We should
prove that $\alpha=c_{i+1}$:
it allows to choose appropriate $x_{i+1}$.
Assume that on the contrary $\alpha<c_{i+1}$. Then
by Greene--Kleitman--Fomin duality
we may find $i$ chains in
$\Q$ of total size $c_1+\ldots +c_{i}$
(not containing $t$) and $\alpha$
antichains so that they cover all
elements of $\Q$ and each chain
intersects each antichain. Note that the antichain
containing 
 $t$ remains an antichain
 if we replace $\Q$ back to $\P$
 (by splitting the element $t$). 
 But then $i+1$ chains in $\P$
 may cover at most $c_1+\ldots+c_i+\alpha$
 elements: at most $\alpha(i+1)$
 elements may be covered by $\alpha$ antichains, and
 exactly $c_1+\ldots+c_i-i\alpha$ elements remain.
 A contradiction.
\end{proof}

\begin{proof}[Proof of the upper bound in \eqref{aandc}]
By induction, we suppose
the inequality proved for 
posets with $n-1$ elements
(the base $n=1$ is obvious).
Enumerate antichain $A$
of maximal elements
as in Lemma \ref{st}. 
Let
$r_1\geqslant r_2\geqslant \ldots$ be chain 
Greene--Kleitman--Fomin
parameters of the poset $\P-x_i$.
It is clear that for all
$j$ we have $c_1+\ldots+c_j\geqslant r_1+\ldots+r_j\geqslant
c_1+\ldots+c_j-1$, and for $j\leqslant i-1$
the equality $r_1+\ldots+r_j=
c_1+\ldots+c_j$ holds. 
Therefore using Proposition
\ref{predl_maj} we conclude 
that the multiset $\{r_1,r_2,\ldots\}$
majorizes the multiset $\{c_1,c_2,\ldots,c_{j-1},c_j-1,c_{j+1},\ldots\}$.
By Karamata inequality for factorial we have $\prod r_j!\geqslant \frac1{c_j}\prod_j c_j!$,
and using induction proposition
we get
$$
e(\P-x_i)\leqslant \frac{(n-1)!}{\prod r_j!}
\leqslant \frac{c_j(n-1)!}{\prod c_j!}.
$$
Sum up this by all $i$ 
and apply the obvious
equality $e(\P)=\sum_i e(\P-x_i)$
we complete the induction step.
\end{proof}

\section{Accuracy of the
bounds}

The upper and lower bounds
in \eqref{aandc} are
close enough: there ratio is always $e^{O(n\log\log n)}=n!^{o(1)}$ (but alas
worse than exponential in $n$).
This follows from the following
general inequality.

\begin{theorem}
For any two conjugate partitions
$a_1\geqslant a_2\geqslant\ldots$
and $c_1\geqslant c_2\geqslant\ldots$
of the positive integer $n$ 
onto positive integer parts the inequality
$$\prod_i a_i! \prod_i c_i!\geqslant \left(\frac{n}{eH_n}\right)^n=n!e^{-n\log \log n+O(\log n)}$$
holds (here $H_n=1+1/2+\ldots+1/n=\log n+O(1)$ 
is a Harmonic sum).
\end{theorem}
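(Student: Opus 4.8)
The plan is to rewrite the product $\prod_i a_i!\,\prod_i c_i!$ as a product over the cells of a Young diagram and then to bound that product from below by a layer‑cake (distribution‑function) argument. First I would let $D$ be the Young diagram of the partition $a_1\geqslant a_2\geqslant\ldots$, so that a cell $(i,j)$ (row $i$, column $j$) lies in $D$ exactly when $j\leqslant a_i$, equivalently when $i\leqslant c_j$, and $|D|=n$. Grouping factors by rows and then by columns,
$$\prod_i a_i!=\prod_i\prod_{j=1}^{a_i}j=\prod_{(i,j)\in D}j,\qquad \prod_j c_j!=\prod_j\prod_{i=1}^{c_j}i=\prod_{(i,j)\in D}i,$$
so the quantity to be bounded equals $P:=\prod_{(i,j)\in D}ij$, and, on taking logarithms, the claim becomes $\sum_{(i,j)\in D}\log(ij)\geqslant n\log\tfrac{n}{eH_n}$.

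Next I would invoke the elementary identity $\log(ij)=\int_1^{ij}\frac{dt}{t}=\int_1^\infty\frac{\mathbf 1[ij>t]}{t}\,dt$ (legitimate since $ij\geqslant1$), sum it over the $n$ cells of $D$, and exchange sum and integral to obtain
$$\log P=\int_1^\infty\frac{n-m(t)}{t}\,dt,\qquad m(t):=\bigl|\{(i,j)\in D:\ ij\leqslant t\}\bigr|.$$
The decisive point is a counting bound on $m(t)$: the cells of $D$ with $ij\leqslant t$ are a subset of \emph{all} positive lattice points $(i,j)$ with $ij\leqslant t$, and a Young diagram cannot contain more of the hyperbolic region $\{ij\leqslant t\}$ than the region itself has; thus $m(t)\leqslant g(t):=\sum_{1\leqslant i\leqslant t}\lfloor t/i\rfloor\leqslant tH_{\lfloor t\rfloor}$.

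To finish, I would use that $n-m(t)\geqslant0$ for every $t$ (there are only $n$ cells), so truncating the integral to $t\in[1,\,n/H_n]$ only decreases it; and on that interval $\lfloor t\rfloor\leqslant n$ gives $g(t)\leqslant tH_n$, hence $n-m(t)\geqslant n-tH_n\geqslant0$. Therefore
$$\log P\ \geqslant\ \int_1^{n/H_n}\Bigl(\frac{n}{t}-H_n\Bigr)\,dt=n\log\frac{n}{H_n}-n+H_n\ \geqslant\ n\log\frac{n}{eH_n},$$
which is exactly $P\geqslant(n/(eH_n))^n$ (in fact with a spare factor $e^{H_n}$). The closed‑form reformulation of the right‑hand side then follows from Stirling's formula and $H_n=\log n+O(1)$.

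I expect no real conceptual difficulty once the cell‑product identity $\prod_i a_i!\prod_j c_j!=\prod_{(i,j)\in D}ij$ is seen; the only delicate point is calibrating the truncation level $T=n/H_n$ so that two competing requirements are met simultaneously — that $g(t)\leqslant n$ holds throughout $[1,T]$ (so that replacing $m(t)$ by $g(t)$ never costs us, i.e.\ never makes the integrand negative) and that the truncated integral still evaluates to precisely $n\log(n/H_n)-n+H_n$; this trade‑off is what pins down the constant $eH_n$. A trivial sanity check is that $n/H_n\geqslant1$, which holds because $H_n\leqslant n$, so the interval of integration is non‑degenerate.
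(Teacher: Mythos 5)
Your proof is correct, and it takes a genuinely different route from the paper's. The paper first uses the conjugate-partition identity $\prod_i c_i!=\prod_k k^{a_k}$ to reduce the claim to $\prod_k k^{a_k}a_k!\geqslant (n/(eH_n))^n$ for arbitrary non-negative integers $a_k$ summing to $n$, and then runs a discrete variational argument: at a minimizer one must have $k(a_k+1)\geqslant \ell a_\ell$ for all $k,\ell$, summing these with weights $1/\ell$ gives $k(a_k+1)H_n\geqslant n$, and the elementary bound $a!\geqslant\left(\frac{a+1}{e}\right)^a$ finishes. You instead fold both factorials into the single cell product $\prod_{(i,j)\in D}ij$ over the Young diagram, apply a layer-cake integral representation of $\log$, and control the count $m(t)$ of cells under the hyperbola $ij\leqslant t$ by the lattice-point bound $tH_n$; truncating at $t=n/H_n$ yields the result. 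The harmonic sum enters both arguments through essentially the same mechanism (summing $1/\ell$ over $\ell\leqslant n$ versus summing $1/i$ along the hyperbola), but your version avoids the extremal/perturbation step and the auxiliary factorial inequality, gives the slightly stronger bound $P\geqslant e^{H_n}\left(\frac{n}{eH_n}\right)^n$, and in fact proves more: the only property of $D$ you use is that it is a set of $n$ lattice points in the positive quadrant, so the conclusion holds for $\prod_{(i,j)\in D}ij$ over any such set, not only Young diagrams. All the individual steps (the cell-product identity, the interchange of sum and integral for a finite sum of non-negative integrands, the estimates $m(t)\leqslant g(t)\leqslant tH_{\lfloor t\rfloor}\leqslant tH_n$ on $[1,n/H_n]$, and the evaluation $n\log(n/H_n)-n+H_n\geqslant n\log(n/(eH_n))$) check out.
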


\begin{proof}
We have $$\prod_i c_i!=\prod_k k^{|i:c_i\geqslant k|}=\prod_k k^{a_k},$$
thus the inequality to prove is
\begin{equation}\label{nervo}
    \prod_k k^{a_k}a_k!\geqslant \left(\frac{n}{eH_n}\right)^n,
\end{equation}
which we now prove for
arbitrary non-negative integers
$a_1,a_2,\ldots,a_n$ which sum up to $n$.
Without loss of generality $a_1,a_2,\ldots,a_n$
are chosen so that the left hand
side of \eqref{nervo} is minimal possible.
Choose two indices $k,\ell$ such that
 $a_\ell>0$ and try to replace
 $a_k$ to $a_k+1$, $a_\ell$ to $a_\ell-1$.
 Left hand side of \eqref{nervo} is multiplied by  $k(a_k+1)/(\ell a_\ell)$, thus
 $k(a_k+1)\geqslant \ell a_\ell$. This last
 inequality holds for $a_\ell=0$ too. 
 Sum up the inequalities
  $k(a_k+1)\cdot \frac{1}\ell\geqslant a_\ell$ over $\ell=1,2,\ldots,n$ we get
 $k(a_k+1)H_n\geqslant a_1+\ldots+a_n=n$.
 Next, using the inequality
 $a!\geqslant (\frac{a+1}e)^a$
 which holds for all non-negative integer
 $a$ (it may be proved by induction, for example)
 we get
 $$
 \prod_k k^{a_k}a_k!\geqslant \prod_k \left(\frac{k(a_k+1)}e\right)^{a_k}
 \geqslant \prod_k \left(\frac{n}{eH_n}\right)^{a_k}=
 \left(\frac{n}{eH_n}\right)^n,
 $$
 as needed.
\end{proof}

\end{document}